\title{Translation-like actions of nilpotent groups}
\author{David Bruce Cohen and Mark Pengitore}
\DeclareMathOperator{\ZZ}{\ensuremath{\mathbb{Z}}}
\DeclareMathOperator{\NN}{\ensuremath{\mathbb{N}}}
\DeclareMathOperator{\RR}{\ensuremath{\mathbb{R}}}
\DeclareMathOperator{\diam}{diam}
\DeclareMathOperator{\cone}{Cone}
\def\To{{\rightarrow}}
\def\mfu{{\mathfrak{u}}}
\def\mfU{{\mathfrak{U}}}
\def\heps{{h_\epsilon}}
\def\meps{{\mu(\epsilon)}}
\def\hneps{{h_{\epsilon,n}}}
\def\Aeps{{A_\epsilon}}
\def\Aneps{{A_{\epsilon,n}}}
\def\finf{{f_\infty}}
\theoremstyle{plain}
\newtheorem{theorem}{Theorem}[section]
\newtheorem{lemma}[theorem]{Lemma}
\newtheorem{defn}[theorem]{Definition}
\newtheorem{prop}[theorem]{Proposition}
\newtheorem{ex}[theorem]{Example}
\newtheorem{corollary}[theorem]{Corollary}
\theoremstyle{definition}
\newtheorem*{definition}{Definition}
\DeclareMathOperator{\Log}{Log}
\newcommand{\eps}{\varepsilon}
\newcommand{\bdef}{\overset{\text{def}}{=}}
\newcommand{\ga}{\gamma}
\newcommand{\Ga}{\Gamma}
\newcommand{\De}{\Delta}
\newcommand{\set}[1]{\left\{#1\right\}}
\newcommand{\pr}[1]{\left( #1 \right) }
\newcommand{\Fr}[1]{\ensuremath{\mathfrak{#1}}}
\newcommand{\N}{\ensuremath{\mathbb{N}}}
\newcommand{\Q}{\ensuremath{\mathbb{Q}}}
\newcommand{\R}{\ensuremath{\mathbb{R}}}
\newcommand{\Z}{\ensuremath{\mathbb{Z}}}
\newcommand{\map}[3]{#1 : #2 \rightarrow #3}
\def\conga{{\text{Cone}(\Gamma)}}
\def\congahat{{\text{Cone}(\widehat{\Gamma})}}
\def\gahat{{\widehat{\Gamma}}}
\def\gainf{{\Gamma_\infty}}
\begin{document}

\maketitle

\begin{abstract}
We give a new obstruction to translation-like actions on nilpotent groups. Suppose we are given two finitely generated torsion free nilpotent groups with the same degree of polynomial growth, but non-isomorphic Carnot completions (asymptotic cones). We show that there exists no injective Lipschitz function from one group to the other. It follows that neither group can act translation-like on the other.
\end{abstract}

\section{Introduction}
\subsection{Translation-like actions.}
\begin{defn}
\cite[Definition 6.1]{Whyte1} Let $G$ and $H$ be finitely generated groups equipped with word metrics. We say that an action of $H$ on $G$ is free if no $g\in G$ can be fixed by any $h\in H\setminus\{1_H\}$. We say that an action of $H$ on $G$ is translation-like if it is free and, for every $h\in H$, we have $\sup_{g\in G}d(g,h\cdot g)<\infty.$ 
\end{defn}

Given a translation-like action of $H$ on $G$, one sees that for some Cayley graph of $G$, each orbit of $H$ is an embedded copy of the Cayley graph of $H$, and the disjoint union of all $H$-orbits covers $G$.  Any finitely generated group $G$ acts translation-like on itself by letting $g\in G$ act as the map
$$h\mapsto hg^{-1}.$$
By restricting this action, we see that any subgroup of $G$ acts translation-like on $G$. Thus, a translation-like action by $H$ is a geometric generalization of a $H$ subgroup.

\paragraph{Geometric analogues of conjectures.}There are many questions in group theory which ask whether having a subgroup of some isomorphism type is a complete obstruction to some property. For example, the Burnside problem asked whether having an infinite cyclic subgroup is a complete obstruction to finiteness. For another example, the Von Neumann-Day problem asked whether having a $\ZZ\ast\ZZ$ subgroup is a complete obstruction to amenability. Both of these questions have negative answers, by work of Golod-Shafarevich \cite{gs} and Olshanskii \cite{olshanskii}, respectively. However, if we ask instead whether having a translation-like action by $\ZZ$ (respectively $\ZZ\ast\ZZ$) is a complete obstruction to finiteness (respectively amenability), the answer is known to be positive, as we explain below.

\paragraph{Examples of translation-like actions.}
We now give some examples of translation-like actions that do not arise from subgroups.
\begin{itemize}
\item If there is a bilipschitz map $\Psi:G\To H,$ then $H$ acts translation like on $G$ by setting $h\in H$ to act as the map
$$g\mapsto \psi^{-1}(\psi(g)h^{-1}).$$
\item Seward \cite{Seward1} showed that $\ZZ$ acts translation-like on every infinite group, or equivalently that having a translation-like action of $\ZZ$ is a complete obstruction to being finite .
\item Whyte \cite{Whyte1} showed that $\ZZ\ast\ZZ$ acts translation-like on every non-amenable group.
\item The first author \cite{Cohen1} showed that $\ZZ\times\ZZ$ acts translation-like on the fundamental group of a closed hyperbolic $3$-manifold.
\end{itemize}

\paragraph{Known obstructions to translation-like actions.} We briefly survey some known obstructions to the existence of a translation-like action.
\begin{itemize}
\item Whyte \cite{Whyte1} showed that $\ZZ\ast\ZZ$ cannot act translation-like on an amenable group. Since it acts translation-like on every non-amenable group (as noted above), this shows that having a translation-like action by $\ZZ\ast\ZZ$ is a complete obstruction to non-amenability. Note that many amenable groups (e.g., non-nilpotent elementary amenable groups \cite{chou}) admit Lipschitz injections from $\ZZ\ast\ZZ$, so admitting a translation-like action by some group is really a stronger notion than admitting a Lipschitz injection from that group.
\item If the growth function of $H$ has a greater growth rate than $G$, then $H$ cannot act translation-like on $G$. In particular, if $G$ is nilpotent, then any group acting translation-like on $G$ must also be nilpotent. 
\item If the asymptotic dimension of $H$ is greater than $G$, then $H$ cannot act translation-like on $G$.\cite{jiang}\cite[\S 6]{bst}
\item If the separation function of $H$ is greater than $G$, then $H$ cannot act translation-like on $G$.\cite{jiang}\cite[Lemma 1.3]{bst}
\end{itemize}

We mention also work of Jeandel which has shown that certain dynamical properties which always pass from a group $G$ to its subgroups also pass to any $H$ which acts translation-like on $G$.

\subsection{Nilpotent groups.}
Recall that a finitely generated group has polynomial growth if and only if it is virtually nilpotent \cite{Bass01},\cite[Theorem 3.2]{wolf},\cite{gromov}.  If a group $H$ acts translation-like on a nilpotent group $G$, it follows that $H$ has polynomial growth bounded by that of $G$, and hence must be virtually nilpotent itself. Separation function and asymptotic dimension give additional obstructions to the existence of translation-like actions on a nilpotent group, but these obstructions are asymmetric (if one of them obstructs $H$ from acting on $G$, then it does not necessarily obstruct $G$ from acting on $H$) and we know of no algorithm for computing them.

A result of Pansu \cite{Pansu} asserts that the asymptotic cone of a finitely generated, torsion free nilpotent group $\Gamma$ may be naturally identified with a nilpotent Lie group (see \S\ref{subsection:carnotgroups}) known as the Carnot completion of $\Gamma$. The following is our main theorem.

\begin{theorem}
\label{introtheorem}
Suppose $\Ga$ and $\De$ are finitely generated, torsion free nilpotent groups of the same polynomial degree of growth, and $f:\Ga\To\De$ is an injective Lipschitz map. Then $\Ga$ and $\De$ have asymptotic cones which are isomorphic as Lie groups.

In particular, if finitely generated torsion free nilpotent groups $\Ga$ and $\De$ have the same polynomial degree of growth but non-isomorphic asymptotic cones, then neither group can act translation-like on the other.
\end{theorem}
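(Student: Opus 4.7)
The plan is to apply Pansu's theorem that every Lipschitz map between Carnot groups is Pansu-differentiable almost everywhere, with derivatives that are graded Lie group homomorphisms. The strategy has three stages: pass to asymptotic cones to convert $f$ into a Lipschitz map between Carnot groups, use injectivity plus matching polynomial growth to show this map does not collapse volume, and then exploit matching dimensions to promote a nonzero Pansu derivative into a graded Lie group isomorphism. For the first stage, scaling the word metrics of $\Ga$ and $\De$ by $1/n$ and taking an ultralimit produces the Carnot completions $\cone(\Ga)$ and $\cone(\De)$, stratified nilpotent Lie groups of common topological (and Hausdorff) dimension equal to the polynomial growth degree $d$; since $f$ is $L$-Lipschitz, the rescaled maps are uniformly $L$-Lipschitz and the ultralimit $\finf : \cone(\Ga) \To \cone(\De)$ is a well-defined $L$-Lipschitz map.

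For the volume bound, injectivity of $f$ gives $\abs{f(B_n^\Ga)} = \abs{B_n^\Ga}$, and matching polynomial growth gives $\abs{B_n^\Ga} \asymp n^d \asymp \abs{B_{L n}^\De}$, so there is a constant $\alpha > 0$ independent of $n$ with
\[ \abs{f(B_n^\Ga)} \ge \alpha \, \abs{B_{L n}^\De}. \]
Thus $f(\Ga)$ occupies a uniformly positive fraction of every sufficiently large ball in $\De$. Passing to the ultralimit, this discrete density promotes to the assertion that the pushforward $\finf_{*} \mu$ of Haar measure on $\cone(\Ga)$ is absolutely continuous with respect to Haar measure on $\cone(\De)$, with positive Radon--Nikodym derivative on a set of positive Haar measure.

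For the final stage, Pansu's theorem gives Pansu-differentiability of $\finf$ at almost every point, and the derivative $D_x \finf$ is a graded Lie group homomorphism; combining this with the area formula for Carnot groups and the volume bound above forces the Pansu Jacobian to be nonzero on a set of positive measure. At any such point, $D_x \finf$ is a graded homomorphism between stratified nilpotent Lie groups of the same dimension and with nonzero Jacobian, hence a graded Lie group isomorphism, giving the desired identification $\cone(\Ga) \cong \cone(\De)$. The corollary then follows because a translation-like action of $H$ on $G$ restricts on any orbit $H \cdot g_0$ to the injective Lipschitz map $h \mapsto h \cdot g_0$, so non-isomorphism of cones rules out such an action in either direction. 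The main obstacle will be the second stage: the ultralimit of a sequence of injections need not itself be injective, so the passage from the discrete density inequality to a measure-theoretic density statement for $\finf$ requires care, probably via a covering argument on fundamental domains inside $\cone(\Ga)$ and $\cone(\De)$ that trades potential collapse of $\finf$ against the volume gain dictated by the counting bound.
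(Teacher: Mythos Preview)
Your strategy shares its skeleton with the paper's: pass to the induced map $f_\infty$ on asymptotic cones, invoke Pansu differentiability, and argue that the Pansu derivative at some point is a Lie group isomorphism. The essential divergence is in how you force that derivative to be non-degenerate. You attempt to push the discrete counting inequality $|f(B_n^\Ga)| \ge \alpha\,|B_{Ln}^\De|$ through the ultralimit to conclude that $f_\infty$ does not collapse Haar measure, and then invoke an area formula. The paper runs this step in the opposite direction: assuming the Pansu derivative at some point $x$ is \emph{not} an isomorphism, it produces (via Corollary~\ref{annihilating_functional}) a homomorphism $\ell:\De_\infty\to\RR$ annihilating its image, uses $\ell$ to exhibit arbitrarily many disjoint translates of $f_\infty(\overline{B(\epsilon,x)})$ inside $\overline{B(2\epsilon,f_\infty(x))}$, and then \emph{descends back to the discrete groups} to obtain $N$ disjoint translates of $f(B(\epsilon n,b_n))$ inside $B(2\epsilon n,f(b_n))$ along the ultrafilter. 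Injectivity of the original $f$ (not of $f_\infty$) then yields the counting contradiction.

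The gap you flag is real and is precisely the crux of the argument. There is no general mechanism by which the inequality $|f(B_n^\Ga)|\ge\alpha\,|B_{Ln}^\De|$ promotes to absolute continuity of $(f_\infty)_*\mu_{\Ga_\infty}$: a priori $f_\infty$ could send the unit ball into a Haar-null set even though each $f(B_n^\Ga)$ is large, since the ultralimit records $[f(B_n^\Ga)]$ only as a closed subset, not its density. Your proposed covering argument would have to show, at every scale, that a hypothetical collapse of $f_\infty$ forces $f(B_n^\Ga)$ into a set of small cardinality in $\De$; carried out, that is essentially the paper's disjoint-translates argument in disguise. Until that step is supplied, the area formula has nothing to act on. There is also a secondary error: the \emph{topological} dimension of $\cone(\Ga)$ is $\mathrm{rank}(\Ga)$, not the growth degree $d(\Ga)$ (which is the Hausdorff/homogeneous dimension); equal growth does not give equal topological dimension (e.g.\ $\ZZ^4$ versus $H_3(\ZZ)$, both with $d=4$). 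Hence your final step ``nonzero Jacobian plus equal dimension implies isomorphism'' cannot be read as a linear-algebra statement about manifolds of the same dimension; it must be argued via homogeneous dimension and surjectivity, as in Proposition~\ref{carnot_no_surjective_maps}.
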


For us, $f$ being Lipschitz means that for some constant $C$, $d(f(x),f(y))\leq Cd(x,y)$ for all $x,y\in\Ga$, where distances are measured with respect to some word metrics on $\Ga$ and $\De$. The result on translation-like actions follows immediately from the result on Lipschitz injections because, given such an action, the orbit map $g\mapsto g\cdot 1_\De$ Lipschitz injects $\Ga$ into $\De$. Theorem \ref{introtheorem} is proved in \S\ref{subsection:maintheorem} as Theorem \ref{maintheorem}.

\paragraph{Lipschitz embeddings.} We shall now briefly outline the proof of our theorem, and indicate the importance of our hypothesis on growth. First, let us consider the closely related problem of finding obstructions to the existence of Lipschitz \textit{embeddings} from a finitely generated nilpotent group $\Ga$ to another such group $\De$---i.e., Lipschitz maps $f:\Ga\To\De$ satisfying a lower bound of the form $d(f(x),f(y))\geq Cd(x,y)$. Li \cite[Theorem 1.4]{li} shows that if such an $f$ exists, then there is a homomorphic embedding of the asymptotic cone of $\Ga$ into the asymptotic cone of $\De$. To see that this is true, observe that such a map $f$ would induce a Lipschitz embedding $f_\infty$ of the asymptotic cone of $\Ga$ into the asymptotic cone of $\De$. This $f_\infty$ would be Pansu differentiable almost everywhere (see \S\ref{subsection:pansuderivative}), and the Pansu derivative would yield an injective homomorphism $\Ga$ to $\De$.

\paragraph{Lipschitz injections.} The hypotheses of our theorem, however, do not give us a Lipschitz embedding, but a mere Lipschitz \textit{injection} $f:\Ga\To\De$---the only bound on the distortion of $f$ is that $d(f(x),f(y))>0$ when $d(x,y)>0$. Since $f$ is Lipschitz, it still induces a Lipschitz map $\finf$ on asymptotic cones, but a priori we do not know that the Pansu derivative of $\finf$ is injective, or even nontrivial. Indeed, Assouad's Theorem \cite[Proposition 2.6]{assouad} implies that every finitely generated nilpotent group can be Lipschitz injected into some $\ZZ^n$, but the asymptotic cone of $\Ga$ cannot be homomorphically embedded in that of $\ZZ^n$ unless $\Ga$ is virtually abelian. That indicates the necessity of some additional hypothesis.

\paragraph{The growth hypothesis.} Under our hypothesis that $\Ga$ and $\De$ have the same polynomial growth rate, we will see that the Pansu derivative of the induced map on asymptotic cones must be a group isomorphism. In particular, we will see that if it is not an isomorphism, then its image is killed by some nonzero homomorphism $\ell$ from the cone of $\De$ to $\RR$ (Corollary \ref{annihilating_functional}). That will imply for every $N>0$ there exists balls $B\subset\Ga$ and $B'\subset\De$ having comparable radius, such that we may fit at least $N$ disjoint translates of $f(B)$ inside $B'$---such translates may be found by moving ``perpendicular" to the kernel of $\ell$. Since $f$ is injective, the union of these translates will have cardinality $N\# f(B)$, contradicting our assumption on growth. 

\subsection{Acknowledgments.} We wish to thank Moon Duchin and Xiongdong Xie for conversations, and Yongle Jiang for sharing an early version of his work with us. We especially wish to thank Robert Young, who suggested that we look at the Pansu derivative, and told us about Assouad's theorem. The second author would like thank his advisor Ben McReynolds for all his help and guidance. The first author has been supported by NSF award 1502608.

\section{Background}

Let $f,g:\N\To\N$ be non-decreasing functions. We say that $f \preceq g$ if there exists a constant $C > 0$ such that $f(n) \leq C  g(Cn)$ for all $n \in \N$. We say that $f \approx g$ if $f \preceq g$ and $g \preceq f$. We say $f(n)=O(g(n))$ if there is a constant $C$ such that $f(n)\leq Cg(n)$ and $f(n)=o(g(n))$ if $\frac{f(n)}{g(n)}$ goes to $0$ as $n\To \infty$.

Let $\Ga$ be a finitely generated group. We define the commutator of $g, h \in \Ga$ as $[g,h] = g \: h \: g^{-1} \: h^{-1}$. If $A,B \leq \Ga$, we define the commutator of $A$ and $B$ as $[A,B] = \set{[a,b] \: | \: a \in A \text{ and } b \in B}.$ We define the abelianization of a group as $\Ga_{Ab} \bdef \Ga / [\Ga,\Ga]$.

\subsection{Nilpotent groups and nilpotent Lie algebras}

In this section we will review some basic notions of the theory of nilpotent groups.  In particular, we will define the rank, the Mal'tsev completion, and the growth of a nilpotent group.

We define the \emph{$i$-term of the lower central series} in the following way. We let $\Ga_1 \bdef \Ga$ and then for $i>1$ define $\Ga_i \bdef [\Ga,\Ga_{i-1}]$. 
\begin{defn}
	We say that $\Ga$ is \emph{nilpotent of step size} $c$ if $c$ is the minimal natural number such that $\Ga_{c+1} = \set{1}$. If the step size is unspecified, we just say that $\Ga$ is a nilpotent group.
\end{defn}
There is a natural notion of dimension for a torsion free, finitely generated nilpotent group. We define the \emph{rank of $\Ga$} as
$$
\text{rank}(\Ga) = \sum_{i=1}^{c} \text{rank}_\Z(\Ga_i / \Ga_{i+1}).
$$

Let $\Fr{g}$ be a finite dimensional $\R$-Lie algebra. The $i$-th term of the \emph{lower central series of $\Fr{g}$} is defined by $\Fr{g}_1 \bdef \Fr{g}$ and $\Fr{g}_i \bdef [\Fr{g},\Fr{g}_{i-1}]$ for $i > 1$.
\begin{defn}
	We say that $\Fr{g}$ is a nilpotent Lie algebra of step length $c$ if $c$ is the minimal natural number such that $\Fr{g}_{c+1} = \set{0}$. If the step size is unspecified, we just say that $\Fr{g}$ is a nilpotent Lie algebra.
\end{defn}

For a connected, simply connected nilpotent Lie group $G$ with Lie algebra $\Fr{g}$, the exponential map, written as $\map{\exp}{\Fr{g}}{G}$ (see \cite[Theorem 1.127]{Knapp}) is a diffeomorphism whose inverse is formally denoted as $\Log$. By \cite[Theorem 7]{Maltsev_complete}, $G$ admits a cocompact lattice $\Ga$ if and only if $\Fr{g}$ admits a basis $\set{X_i}_{i=1}^{\text{dim}(G)}$ with rational structure constants. We then say that $G$ is $\Q$-defined.  For any torsion-free, finitely generated nilpotent $\Ga$, \cite[Theorem 6]{Maltsev_complete} implies there exists a unique up to isomorphism $\Q$-defined nilpotent group such that $\Ga$ embeds as a cocompact lattice. 

\begin{defn}
	We call this $\Q$-defined Lie group the \emph{Mal'tsev completion of $\Ga$} and denote it as $\widehat{\Ga}$.
\end{defn}

We have the following examples of the Mal'tsev completion of nilpotent groups
\begin{ex}
	The Mal'tsev completion of $\Z^k$ is $\R^k$.
\end{ex}
\begin{ex}
	We establish some notation for the following example. For a commutative unital ring $R$, we define $H_3(R)$ to be the group of $3 \times 3$ upper triangular matrices with $1's$ on the diagonal and $R$-valued coefficients. We then have that $H_3(\Z)$ is the $3$-dimensional integral Heisenberg group where $\widehat{H_3(\Z)} \cong H_3(\R)$.
\end{ex}

We have that the nilpotent step length of $\Ga$ is equal to the step length of $\widehat{\Ga}$. Moreover, $\text{rank}(\Ga) = \text{dim}(\widehat{\Ga})$. Finally, we have that $\widehat{\Ga}_i$ is the Mal'tsev completion of $\Ga_i$. See \cite{Dekimpe}
for more details about the Mal'tsev completion of a torsion-free finitely generated nilpotent group.
	
\paragraph{Growth rates.} Let $\Ga$ be a finitely generated group with a symmetric finite generating subset $S$. We define $\ga_{\Ga}^{S}(n) = |B_{\Ga,S}(n)|$. One observation is that if $S_1$ and $S_2$ are different symmetric finite generating subsets, then $\ga_{\Ga}^{S_1}(n) \approx \ga_{\Ga}^{S_2}(n)$. We refer to the equivalence class of $\ga_{\Ga}^{S}$ as the \emph{growth rate of $\Ga$.} 

When $\Ga$ is an infinite, finitely generated nilpotent group of step size $c$ with a symmetric generating subset $S$, then $\ga_{\Ga}^S(n) \approx n^{d(\Ga)}$ where $d(\Ga) \in \N$ \cite{Bass01}. We call $d(\Ga)$ the \emph{homogeneous dimension of} $\Ga$ and \cite{Bass01} gives a precise computation of $d(\Ga)$ as
$$
d(\Ga) = \sum_{k=1}^{c} k \: \text{dim}_\Z(\Ga_k / \Ga_{k+1}). 
$$

\subsection{The asymptotic cone}
We will now use nonstandard analysis to define an object known as the asymptotic cone of a metric space.  For background on nonstandard analysis see \cite{robinson}, and for a more detailed description of the asymptotic cone see \cite{drutu}. For the duration of this paper, fix a nonprincipal ultrafilter $\mfU$.  Given a sequence of real numbers $(x_n)_{n\in\NN}$, we write
$$\lim_\mfU x_n = x$$
if there is a $U\in\mfU$ such that for all $\epsilon>0$, there exists an $N$ with $|x_n-x|<\epsilon$ for all $n\in U$ such that $n\geq N$. Every sequence has at most one such limit, and if it has no such limit, then it converges to $\infty$ or $-\infty$ along $\mfU$. Given a compact metric space $K$ and a sequence $(x_n)_{n\in\NN}$ of elements of $K$, we define $\lim_\mfU x_n$ to be the unique element $x$ of $K$ such that $\lim_\mfU d(x,x_n)=0$.

\begin{definition}[Asymptotic cone]
Let $X$ be a metric space, $(r_n)_{n\in\NN}$ a sequence of positive real numbers going to $\infty$, and $(b_n)_{n\in\NN}$ a sequence of elements of $X$. Consider the set
$$X[\mfU,(r_n)_{n\in\NN},(b_n)_{n\in\NN}]: \bdef \left\{(x_n)_{n\in\NN}:x_n\in X; \lim_\mfU \frac{d(x_n,b_n)}{r_n}<\infty\right\}
$$
equipped with the pseudometric
$$d((x_n)_{n\in\NN},(y_n)_{n\in\NN})=\lim_\mfU \frac{d(x_n,y_n)}{r_n}.$$
The metric space obtained from $X[\mfU,(r_n),(b_n)]$ by identifying $x$ and $y$ whenever $d(x,y)=0$ is called the \textit{asymptotic cone} of $X$ with respect to $\mfU,(r_n)$, and $(b_n)$. Given a sequence $(x_n)_{n\in\NN}$ of elements of $X$, we will write $[x_n]$ for the corresponding point of the asymptotic cone.
\end{definition}

\paragraph{Standing assumptions.} We do not require the full generality of the above definition. When asymptotic cones are constructed in this paper, we will make the following assumptions.
\begin{itemize}
\item The metric space $X$ whose asymptotic cone we wish to construct is a finitely generated group $\Gamma$ equipped with a left invariant metric.
\item The sequence of base points $(b_n)_{n\in\NN}$ has all terms equal to the identity $1_\Gamma$.
\item The sequence of scaling factors $(r_n)_{n\in\NN}$ is given by $r_n=n$.
\end{itemize}

\begin{definition}
Under these assumptions we denote the asymptotic cone of $\Gamma$ as $\cone(\Gamma)$.
\end{definition}

\subsection{Carnot Groups}
\label{subsection:carnotgroups}
Let $\Gamma$ be a torsion free, finitely generated nilpotent group. Pansu \cite{Pansu} showed that the metric space $\cone(\Gamma)$ is given by a deformation $\Gamma_\infty$ of $\widehat{\Gamma}$ known as the Carnot completion of $\Gamma$. This space $\Gamma_\infty$ is a nilpotent Lie group equipped with a Carnot-Carath\'{e}odory metric. A strengthening of Pansu's result, due to Cornulier \cite{cornulier}, shows that $\cone(\Gamma)$ has a natural group structure, and that there is an isometric isomorphism $\cone(\Gamma)\To\Ga_\infty$. We shall now give a description of $\Ga_\infty$. First, we define a Carnot group.

\begin{defn}
	Let $\Fr{g}$ be a nilpotent Lie algebra of step length $c$. We say that $\Fr{g}$ is a \emph{Carnot Lie algebra} if it admits a grading
	$
	\Fr{g} = \bigoplus_{i=1}^c \Fr{v}_i
	$
	where 
	$
	\Fr{g}_t = \bigoplus_{i=t}^c \Fr{v}_i
	$ and $\Fr{v}_1$ generates $\Fr{g}$. We say that Lie group $G$ is \emph{Carnot} if its Lie algebra is Carnot.
\end{defn}

\paragraph{The Carnot completion.} To a torsion free, finitely generated nilpotent group $\Ga$ of step length $c$, we associate a Carnot Lie group $\Gamma_\infty$ in the following way. Let $\Fr{g}$ be the Lie algebra of $\widehat{\Ga}$ and take 
$
\Fr{g}_{\infty} \bdef \bigoplus_{i=1}^{c} \Fr{g}_i / \Fr{g}_{i+1}.
$
 Since $[\Fr{g}_i,\Fr{g}_{j}] \subseteq \Fr{g}_{i+j}$, the Lie bracket of $\Fr{g}$ defines a bilinear map
$
\pr{\Fr{g}_i / \Fr{g}_{i+1}} \otimes \pr{\Fr{g}_j / \Fr{g}_{j+1}} \longrightarrow (\Fr{g}_{i+j} / \Fr{g}_{i+j+1})
$
 for any $1\leq i,j \leq c$. We extend this linearly to a Lie bracket $[ - : -]_\infty$ on 
$
\Fr{g}_{\infty}.
$
 The pair $\pr{\Fr{g}_\infty,[  - : - ]_\infty}$ is called the \emph{graded Lie algebra} associated to $\Fr{g}$. By exponentiating $\Fr{g}_\infty$, we obtain a connected, simply connected nilpotent Lie group $\Ga_\infty$ which we call the \emph{Carnot completion of $\Ga$.} By construction, we have that $\text{rank}(\Ga_i) = \text{dim}((\Ga_{\infty})_i)$ for all $i\geq 1$. In particular, the step length of $\Ga_{\infty}$ is equal to the step length of $\Ga$,  and $\text{dim}(\Ga_{\infty}) = \text{rank}(\Ga)$. 

\paragraph{Dilations of the Carnot completion.} Observe that the linear maps $\map{d\delta_t}{\Fr{g}_\infty}{\Fr{g}_\infty}$ given by
$$
d\delta_t(v_1,\cdots,v_c) = \pr{t \cdot v_1, t^2 \cdot v_2, \cdots, t^c \cdot v_c}.
$$
satisfy $d\delta_t([v,w]_\infty) = [d\delta_t(v),d\delta_t(w)]_\infty$ and $d\delta_{ts} = d\delta_t \circ d\delta_s$ for $v,w \in \Fr{g}_\infty$, $t,s > 0$.  Thus, $\set{d\delta_t \: | \: t > 0}$ gives a one parameter family of Lie automorphisms of $\Fr{g}_\infty$. Subsequently, we have an one parameter family of automorphisms of $\Ga_\infty$ denoted $\delta_t$. Since $\exp$ carries $\Fr{g}_2$ to $(\Ga_\infty)_2$, we see that $\exp$ induces an isomorphism of groups from $(\Fr{g}_\infty)_\text{Ab}$ to $(\Ga_\infty)_\text{Ab}$. If $\ell:\Gamma_\infty\To \RR$ is a homomorphism, there exists an induced map $\tilde{\ell}:(\Fr{g}_\infty)_\text{Ab}\To\RR$ such that $\ell\circ\exp(v_1,\ldots,v_c)=\tilde{\ell}(v_1)$. We see that
$$\ell(\delta_t(\exp(v_1,\ldots,v_c)))=\tilde{\ell}(tv_1)
=t\tilde{\ell}(v_1)=t\ell(\exp(v_1,\ldots,v_c)).$$

\paragraph{A Carnot-Carath\'{e}odory metric on the Carnot completion.} Fix a linear isomorphism $L:\Fr{g}\To\Fr{g}_\infty$ such that $L(\Fr{g}_i)=(\Fr{g}_\infty)_i$. Following \cite{cornulier}, equip $\widehat{\Ga}$ with the word metric associated to some compact generating set, and let $\Phi_n:\Ga_\infty\To\widehat{\Ga}$ be the function
$$\Phi_n:\Gamma_\infty
{\buildrel \text{log}\over\longrightarrow} \mathfrak{g}_\infty
{\buildrel \delta_{n}\over\longrightarrow} \mathfrak{g}_\infty
{\buildrel \text{L}\over\longrightarrow} \mathfrak{g}
{\buildrel \text{exp}\over\longrightarrow} \widehat{\Gamma},$$
and let $d_\mfU$ be the metric on $\Ga_\infty$ given by
$$d_\mfu(g,h)=\lim_\mfU \frac{d(\Phi_n(g),\Phi_n(h))}{n}.$$
Then we have that $d_\mfU$ is a left invariant Carnot-Carath\'{e}odory metric on $\Ga_\infty$ by \cite[Corollary A.10]{cornulier}, and that $|\delta_t(g)|=t|g|$ when $|\cdot|$ denotes $d_\mfU$ distance from $1_{\Ga_\infty}$. In particular, $d_\mfU$ induces the usual topology on $\gainf$.

\paragraph{Identifying $\conga$ with $\Ga_\infty$.} When $\Gamma$ is a torsion free, finitely generated nilpotent group, we shall see that $\conga$ has a natural group law, and that it is isomorphic, as a metric group, to $\gainf$ equipped with some Carnot-Carath\'{e}odory metric (usually different from the metric $d_\mfU$ described above). This is not a new result, but we could not find an explicit reference, so we explain below how to pull it from the literature.

As above, equip $\gahat$ with the word metric associated to some compact generating set, so that inclusion of $\Ga$ into $\gahat$ is a quasi-isometry. Let $i:\conga\To\congahat$ be the induced map on asymptotic cones.

Choose a linear map $L:\Fr{g}\To\Fr{g}_\infty$ such that $L(\Fr{g}_i)=(\Fr{g}_\infty)_i$, and let $\Psi_n:\widehat{\Gamma}\To\Gamma_\infty$ be given by the following compositions:
$$\Psi_n:\widehat{\Gamma}
{\buildrel \text{log}\over\longrightarrow} \mathfrak{g}
{\buildrel \text{L}\over\longrightarrow} \mathfrak{g}_\infty
{\buildrel \delta_{1/n}\over\longrightarrow} \mathfrak{g}_\infty
{\buildrel \text{exp}\over\longrightarrow} \Gamma_\infty.$$
Let $\Psi:\cone(\widehat{\Gamma})\To\Gamma_\infty$ be given by
$$[g_n]\mapsto\lim_{\mfU}\Psi_n(g_n).$$
We have the following facts.
\begin{itemize}
\item By \cite[Proposition 3.1]{cornulier}, one may define a group structure on $\conga$ or $\congahat$ by taking $[g_n][h_n]:\bdef[g_n \: h_n]$, letting the equivalence class of the constant sequence $[1_\Ga]$ be the identity, and taking $[g_n]^{-1}:\bdef[g_n^{-1}]$. Note that this fact is not trivial, as this multiplication need not be well defined for general finitely generated groups $\Ga$. It is clear that the metrics on $\conga$ and $\congahat$ are left-invariant.
\item $\Psi$ is a group isomorphism \cite[Theorem A.9]{cornulier}. Furthermore, if $\gainf$ is equipped with the metric $d_\mfU$ described above, then $\Psi$ is an isometry \cite[Theorem A.9]{cornulier}.
\item The reader may check that the map $i:\conga\To\congahat$ is bilipschitz, and is a group isomorphism. As Lipschitz maps are continuous, it follows that $\Psi\circ i$ is an isomorphism of topological groups (though not of metric groups, in general).
\item It is probably clear that $\conga$ is a geodesic metric space, but we give a proof anyways. It suffices to exhibit, for any $[g_n]\in\conga$, a geodesic path from the identity $[1_\Ga]$ to $[g_n]$. Write $|g_n|$ for $d(g_n,1_\Ga)$, and let $\gamma_n:\{0,\ldots,|g_n|\}\To\Ga$ be a ``discrete geodesic" connecting $1_\Ga$ to $g_n$, so that $\gamma_n(0)=1_\Ga$, $\gamma_n(|g_n|)=g_n$ and $d(\ga_n(a),\ga_n(b))=|b-a|$ for all $a,b\in\{0,\ldots,|g_n|\}$. To define the desired geodesic in $\conga$, let $r=d([g_n],[1_\Ga])$ and take
$$\ga:[0,r]:\To\conga$$
to be given by
$$\ga(t)=[\ga_n(\lfloor(t/r)|g_n|\rfloor)].$$
Alternatively, one may use \cite{Pansu} to see that $\conga$ is isometric to a geodesic metric space.
\item Following \cite{cornulier}, we see that, because the metric on induced by $\Psi\circ i$ on $\gainf$ is geodesic, it must in fact be a Carnot-Carath\'{e}odory metric \cite[Theorem 2.(i)]{Berestovskii}.
\end{itemize}
Henceforth, we shall simply write $\gainf$ for $\conga$, with the understanding that it is equipped with the Carnot-Carath\'{e}odory metric induced by $\Psi\circ i$.

\paragraph{Balls, bounded sets, and Lipschitz maps in the asymptotic cone.} Observe that if $f:\Ga\To\De$ is Lipschitz, we may, by enlarging the generating set of $\De$, assume that it is $1$-Lipschitz. We often do this to save notation.

\begin{lemma}
Let $f:\Gamma\To\Delta$ be a $1$-Lipschitz map. The map
$$f_\infty: \Gamma_\infty\To\Delta_\infty$$
given by
$$[g_n]\mapsto [f(g_n)]$$
is well defined and $1$-Lipschitz.
\end{lemma}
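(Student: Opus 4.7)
\medskip

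\noindent\textbf{Proof proposal.} The plan is to verify both assertions — well-definedness and the $1$-Lipschitz bound — by pushing the $1$-Lipschitz property of $f$ through the definition of the asymptotic cone, using in the final step the fact that the ultralimit $\lim_\mfU$ is monotone.

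First I would check that for any representative sequence $(g_n)_{n\in\NN}$ of a point $[g_n]\in\Gamma_\infty$, the sequence $(f(g_n))_{n\in\NN}$ represents a genuine point of $\Delta_\infty$; that is, I need $\lim_\mfU d(f(g_n),1_\Delta)/n<\infty$. Using the triangle inequality and the $1$-Lipschitz bound, one has
\[
d(f(g_n),1_\Delta)\ \leq\ d(f(g_n),f(1_\Ga))+d(f(1_\Ga),1_\Delta)\ \leq\ d(g_n,1_\Ga)+C,
\]
where $C=d(f(1_\Ga),1_\Delta)$ is a fixed constant. Dividing by $n$ and taking $\lim_\mfU$, the constant term drops out, so $\lim_\mfU d(f(g_n),1_\Delta)/n\leq\lim_\mfU d(g_n,1_\Ga)/n<\infty$.

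Next I would show independence of the representative: if $(g_n)$ and $(g_n')$ both represent the same point of $\Gamma_\infty$, then $\lim_\mfU d(g_n,g_n')/n=0$, and the $1$-Lipschitz hypothesis gives $d(f(g_n),f(g_n'))/n\leq d(g_n,g_n')/n$, whose $\mfU$-limit is therefore also $0$; this is exactly the condition that $[f(g_n)]=[f(g_n')]$ in $\Delta_\infty$. Thus $f_\infty$ is a well-defined map.

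Finally, for the $1$-Lipschitz bound, given $[g_n],[h_n]\in\Gamma_\infty$, the same pointwise inequality $d(f(g_n),f(h_n))\leq d(g_n,h_n)$ together with monotonicity of $\lim_\mfU$ yields
\[
d\bigl(f_\infty([g_n]),f_\infty([h_n])\bigr)=\lim_\mfU\frac{d(f(g_n),f(h_n))}{n}\leq\lim_\mfU\frac{d(g_n,h_n)}{n}=d([g_n],[h_n]).
\]
There is no real obstacle here; the only mild subtlety is the need to absorb the additive constant $d(f(1_\Ga),1_\Delta)$ when checking that $f_\infty$ lands in $\Delta_\infty$, which is handled as above.
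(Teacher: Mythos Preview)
Your proof is correct and complete. The paper itself does not actually prove this lemma; it simply cites \cite[Proposition 2.9]{cornulier}. Your direct verification --- checking that $(f(g_n))$ lands in the cone (absorbing the additive constant $d(f(1_\Ga),1_\Delta)$), that the assignment is independent of representative, and that the pointwise $1$-Lipschitz inequality passes to the ultralimit by monotonicity --- is the standard elementary argument and is more self-contained than the paper's treatment.
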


\begin{proof}
See \cite[Proposition 2.9]{cornulier}.
\end{proof}

\begin{lemma}
Let $f:\Gamma\To\Delta$ be a $1$-Lipschitz map, $r$ a positive real number, $(b_n)$ a sequence of elements of $\Gamma$ with $d(b_n,1_G)=O(n)$, and $(S_n),(T_n)$ sequences of subsets of $\Gamma$ with $\diam(S_n)$ and $\diam(T_n)=O(n)$. We have the following equalities of subsets in $\Gamma_\infty$ and $\Delta_\infty$
\begin{itemize}
\item $[S_n][T_n]=[S_n T_n]$, where $S_n T_n:\bdef\{st:s\in S_n,t\in T_n\}.$
\item $[S_n]\cap[T_n]=[S_n\cap T_n]$.
\item $[f(S_n)]=\finf([S_n])$.
\item $[B(r n,b_n)]=\overline{B(r,[b_n])}.$
\end{itemize} 
\end{lemma}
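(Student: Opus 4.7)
The plan is to verify each of the four equalities directly from the definition of the asymptotic cone, adopting the convention that $[S_n]\subseteq\Gamma_\infty$ denotes the set of points admitting a representative sequence $(x_n)$ with $x_n\in S_n$ for $\mfU$-almost every $n$. The diameter hypotheses $\diam(S_n),\diam(T_n)=O(n)$ together with $d(b_n,1_\Gamma)=O(n)$ ensure that any bounded-scale sequence drawn from these sets defines a legitimate point of the cone, so the notation is well-posed. In each identity, one containment will be essentially tautological and the other will require re-selecting representatives.

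For the product identity, the inclusion $[S_n][T_n]\subseteq[S_nT_n]$ follows immediately from the group law $[s_n][t_n]=[s_nt_n]$ already established in the excerpt, since $s_nt_n\in S_nT_n$. For the converse, given $[u_n]\in[S_nT_n]$, decompose $u_n=s_nt_n$ with $s_n\in S_n$, $t_n\in T_n$; the diameter bounds, the triangle inequality, and $d(u_n,1_\Gamma)=O(n)$ together force $d(s_n,1_\Gamma),d(t_n,1_\Gamma)=O(n)$, so $[s_n]\in[S_n]$, $[t_n]\in[T_n]$, and $[u_n]=[s_n][t_n]$. The function identity $[f(S_n)]=f_\infty([S_n])$ reduces to the formula $f_\infty([s_n])=[f(s_n)]$ from the preceding lemma, combined with the same diameter observation to check that a preimage representative lies in $\Gamma_\infty$. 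For the ball identity, the forward containment is the estimate $d(x_n,b_n)\le rn\Rightarrow d([x_n],[b_n])\le r$; the reverse requires, given $[y_n]$ with $\lim_\mfU d(y_n,b_n)/n\le r$, a modified representative $y_n'$ chosen along a discrete geodesic from $b_n$ to $y_n$ at graph-distance $\lfloor rn\rfloor$ from $b_n$, so that $d(y_n,y_n')\le\max(0,d(y_n,b_n)-rn)=o(n)$ along $\mfU$ and hence $[y_n']=[y_n]$.

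The main obstacle is the intersection identity, because in full generality it can fail: sequences $s_n\in S_n$ and $t_n\in T_n$ can satisfy $\lim_\mfU d(s_n,t_n)/n=0$ while $S_n\cap T_n=\emptyset$ for every $n$, so that $[S_n]\cap[T_n]$ is nonempty but $[S_n\cap T_n]=\emptyset$. The inclusion $[S_n\cap T_n]\subseteq[S_n]\cap[T_n]$ is immediate. For the reverse containment I would either appeal to additional structure enjoyed by the sets to which the lemma is later applied --- notably balls and their translates, for which the geodesic construction in the ball identity can be adapted to produce a single representative lying in both sets --- or tacitly interpret $[S_n]$ as a closed subset of the cone so that the equality holds for such nice sets. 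Either way, the essential step is to exploit the ultrafilter-almost-everywhere closeness of $(s_n)$ and $(t_n)$ to manufacture a common sequence in $S_n\cap T_n$ representing the same cone point, leveraging the (quasi-)geodesic structure of the Cayley graph to snap one representative onto the other.
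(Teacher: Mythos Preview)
Your arguments for items 1, 3, and 4 are correct. The paper in fact proves only item 4, leaving the first three as exercises; for that item your geodesic-truncation construction is the same idea as the paper's, which takes $h_n\in B(rn,b_n)$ to be a nearest point to the given representative $g_n$ and observes $d(h_n,g_n)\le\max(0,d(g_n,b_n)-rn)$.

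Your doubt about item 2 is entirely warranted: the equality $[S_n]\cap[T_n]=[S_n\cap T_n]$ is false at the stated level of generality, as the example $S_n=\{0\}$, $T_n=\{1\}$ in $\ZZ$ already shows. The paper does not address this. In the one place the lemma is applied---passing disjointness of translates from the cone back to $\Delta$ in the proof of the main theorem---only the trivial inclusion $[S_n\cap T_n]\subseteq[S_n]\cap[T_n]$ is actually needed: from $[S_n]\cap[T_n]=\emptyset$ one gets $[S_n\cap T_n]=\emptyset$, and then the diameter bounds force $S_n\cap T_n=\emptyset$ for $\mfU$-almost all $n$. So the application survives even though the item as stated does not.
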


\begin{proof}
The first three items are left as exercises for the reader.

To see the fourth item, first observe that the closure of the $r$-ball in $\Ga_\infty$ is equal to the closed $r$-ball because $\Ga_\infty$ is a complete manifold. To obtain the inclusion $[B(r n,b_n)]\subseteq\overline{B(r,b_n)}$, we argue as follows. Given a sequence $(g_n)\in B(rn,b_n))_{n\in\NN}$, we have that $\lim_\mfU \frac{d(g_n,b_n)}{n}\leq r$. Subsequently, $d([g_n],[b_n])\leq r$, and thus, $[g_n]\in\overline{B(r,[b_n])}$. We now wish to show inclusion in the other direction. If $[g_n]\in \overline{B(r,[b_n])}$, then $\lim_\mfU \frac{d(g_n,b_n)}{n}\leq r$. Define a sequence $(h_n)_{n\in\NN}$ by taking $h_n\in B(rn,b_n)$ to be a nearest point to $g_n$. We have that
$$\lim_\mfU\frac{d(h_n,g_n)}{n}
=\lim_\mfU \frac{d(g_n,b_n)-rn}{n}=0,$$
 so $[g_n]=[h_n]\in[B(rn,b_n)]$ completing the proof of the fourth item and thus the lemma.
\end{proof}


\section{Lipschitz injections of nilpotent groups.}
We are now going to prove the main theorem of the paper. This theorem will state that if $f:\Ga\To\De$ is an injective Lipschitz map between finitely generated nilpotent groups, and $d(\Ga)=d(\De)$, then $\Ga_\infty\cong\De_\infty$. The proof is organized as follows.
\begin{itemize}
\item First, following Pansu \cite{Pansu2}, we see that the induced map on asymptotic cones $f_\infty:\Ga_\infty\To\De_\infty$ is ``Pansu differentiable" almost everywhere. This derivative, where defined, is a group homomorphism $\Ga_\infty\To\De_\infty$.
\item Second, we show that a homomorphism between Carnot groups of the same growth is an isomorphism if it induces a surjective map on abelianizations.
\item Finally, we show that the Pansu derivative of $f_\infty$ at any point where it is defined must be surjective.
\end{itemize}


\subsection{The Pansu derivative.}
\label{subsection:pansuderivative}

Given a Lipschitz map $F:\RR\To\RR$, Rademacher's theorem asserts that $F$ is differentiable almost everywhere. Pansu proved a generalization of this theorem to Lipschitz maps $F:G\To H$ of Carnot groups. In particular, he showed that, given the following definitions, we must have that $F$ is differentiable almost everywhere \cite[Theorem 2]{Pansu2}.

\begin{definition}
Given $g\in G$, we say that $F$ is differentiable at $g$ if the limit
$$\lim_{s\To 0}\delta_{1/s}\left(F(g)^{-1}F(g\:\delta_s(x))\right),$$
converges uniformly for $x$ in any compact subset of $G$. If $F$ is differentiable at $g$, the function
$$x\mapsto \lim_{s\To 0}\delta_{1/s}\left(F(g)^{-1}F(g\:\delta_s(x))\right)
$$
defines a homomorphism known as the derivative of $F$ at $g$ and is denoted $DF|_g:G\To H$. 
\end{definition}

The reader may compare to the ordinary derivative $\lim_{s\To 0}(F(g+sx)-F(g))/s$ of a real valued function of one real variable.

\subsection{Homomorphisms of Carnot groups.}
We will now establish the following alternative: a homomorphism between Carnot groups of the same growth rate either fails to be surjective on abelianizations or is an isomorphism. We break this up into two parts. We first demonstrate that a surjective homomorphism between Carnot groups of the same growth rate is an isomorphism (Proposition \ref{carnot_no_surjective_maps}). We then show that a homomorphism between Carnot groups is surjective if and only if the induced map on abelianizations is surjective (Proposition \ref{abelian_map_equiv}). We then deduce that if a homomorphism $F:\Ga_\infty\To\De_\infty$ of Carnot groups of the same growth is not an isomorphism, its image is annihilated by some nontrivial homomorphism $\ell:\De_\infty\To\RR$ (Corollary \ref{annihilating_functional}).

\begin{prop}\label{carnot_no_surjective_maps}
	Let $\Ga$ and $\De$ be torsion free, finitely generated nilpotent groups such that $d(\Ga) = d(\De)$. Suppose that $\map{F}{\Ga_{\infty}}{\De_\infty}$ is a surjective homomorphism. Then $\Ga_{\infty} \cong \De_\infty$.
\end{prop}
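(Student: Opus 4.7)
The plan is to reduce the statement to a dimension count on the graded pieces of the two Carnot Lie algebras. First, since $\Ga_\infty$ and $\De_\infty$ are connected, simply connected nilpotent Lie groups, the surjective homomorphism $F$ corresponds to a surjective Lie algebra homomorphism $dF:\Fr{g}_\infty^{\Ga}\To\Fr{g}_\infty^{\De}$. Any Lie algebra homomorphism carries the lower central series into the lower central series, so $dF((\Fr{g}_\infty^{\Ga})_t)\subseteq(\Fr{g}_\infty^{\De})_t$, and combined with surjectivity (together with the fact that each $(\Fr{g}_\infty)_t$ is spanned by $t$-fold brackets of degree-one elements) this gives $dF((\Fr{g}_\infty^{\Ga})_t)=(\Fr{g}_\infty^{\De})_t$ for every $t$.

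Next, I would pass to the associated graded quotients. For each $i$ there is an induced surjection of finite-dimensional real vector spaces
$$\overline{dF_i}:\Fr{v}_i^{\Ga}=(\Fr{g}_\infty^{\Ga})_i/(\Fr{g}_\infty^{\Ga})_{i+1}\To\Fr{v}_i^{\De}=(\Fr{g}_\infty^{\De})_i/(\Fr{g}_\infty^{\De})_{i+1},$$
so $\dim_\R\Fr{v}_i^{\Ga}\geq\dim_\R\Fr{v}_i^{\De}$ for every $i$. By construction of the Carnot completion as the graded algebra associated to $\Fr{g}$ we have $\dim_\R\Fr{v}_i=\text{rank}_\Z(\Ga_i/\Ga_{i+1})$, so Bass's formula becomes
$$d(\Ga)=\sum_{i=1}^{c} i\cdot\dim_\R\Fr{v}_i^{\Ga},\qquad d(\De)=\sum_{i=1}^{c} i\cdot\dim_\R\Fr{v}_i^{\De}.$$
The hypothesis $d(\Ga)=d(\De)$ then forces $\dim_\R\Fr{v}_i^{\Ga}=\dim_\R\Fr{v}_i^{\De}$ for each $i\geq 1$, and therefore each $\overline{dF_i}$ is an isomorphism.

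Finally, I would upgrade this to injectivity of $dF$ by a standard filtration argument: if $x\in\ker(dF)$ were nonzero, let $i$ be the largest index with $x\in(\Fr{g}_\infty^{\Ga})_i$; then the class of $x$ in $\Fr{v}_i^{\Ga}$ is nonzero, but $\overline{dF_i}$ kills it, contradicting injectivity of $\overline{dF_i}$. Hence $dF$ is a Lie algebra isomorphism and $F$ is a Lie group isomorphism. The argument is really more bookkeeping than obstacle; the only mildly subtle step is confirming that the graded dimensions of the Carnot Lie algebra match the ranks of the lower central factors of $\Ga$, so that the hypothesis $d(\Ga)=d(\De)$ translates into the numerical equality that drives the proof.
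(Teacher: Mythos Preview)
Your proposal is correct and follows essentially the same approach as the paper: pass to the surjective Lie algebra map, show by induction that it surjects on each term of the lower central series and hence on each graded quotient, and then use the Bass formula together with $d(\Ga)=d(\De)$ to force equality of the graded dimensions. The only cosmetic difference is the final step---the paper concludes injectivity by summing the graded dimensions to get $\dim\Fr{g}=\dim\Fr{h}$ and invoking that a surjective linear map between equal-dimensional spaces is an isomorphism, whereas you run an equivalent filtration argument on $\ker(dF)$.
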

\begin{proof}
	
	Before starting, we establish some notation for this proposition. We let $\Fr{g}$ and $\Fr{h}$ be the Lie algebras of $\Ga_{\infty}$ and $\De_{\infty}$ respectively. We also let $\Ga_{\infty,i} = \pr{\Ga_{\infty}}_i$ and $\De_{\infty,i} = \pr{\De_{\infty}}_i$. Finally, we let $c_1$ and $c_2$ be the step lengths of $\Ga$ and $\De$ respectively. 

Lemma 1.2.5 of \cite{Dekimpe} implies that $\exp(\Fr{g}_t) = \Ga_{\infty,t}$ and $\exp(\Fr{h}_t) = \De_{\infty,t}$. Since $\text{dim}(\Ga_{\infty,t}) = \text{rank}(\Ga_t)$ and $\text{dim}(\De_{\infty,t}) = \text{rank}(\De_t)$, we may phrase the growth rate of $\Ga$ and $\De$ in terms of the $\R$-ranks of the quotients of the lower central series of $\Fr{g}$ and $\Fr{h}$. We may write
	$$
	d(\Ga) = \sum_{k=1}^{c_1} k \cdot \text{dim}_\R (\Fr{g}_k / \Fr{g}_{k+1}).
	$$
	Similarly, we have
	$$
	d(\De) = \sum_{k=1}^{c_2} k \cdot \text{dim}_\R (\Fr{h}_k / \Fr{h}_{k+1}).
	$$
	
	We claim that the induced map $\map{dF_1|_{\Fr{g}_i}}{\Fr{g}_i}{\Fr{h}_i}$ is surjective. We proceed by induction on the term of the lower central series, and since $\Fr{g}_1 = \Fr{g}$ and $\Fr{h}_1 = \Fr{h}$, we have that the base case is evident. Now let $i > 1$, and consider the map $\map{dF_1|_{\Fr{g}_i}}{\Fr{g}_i}{\Fr{h}_i}$. Observe that $\Fr{g}_i = [\Fr{g}_{i-1},\Fr{g}]$ and $\Fr{h}_i = [\Fr{h}_{i-1},\Fr{h}]$.
	
	For each $Y_1 \in \Fr{h}_{i-1}$ and $Y_2 \in \Fr{h}$, the inductive hypothesis implies that there exist $X_1 \in \Fr{g}_{i-1}$ and $X_2 \in \Fr{g}$ such that $dF_1(X_1) = Y_1$ and $dF_1(X_2) = Y_2$. Since $dF_1$ is a Lie algebra morphism, we have $dF_1([X_1,X_2]) = [dF_1(X_1),dF_1(X_2)] = [Y_1,Y_2]$. Observing that $\Fr{h}_i$ is generated by elements of the above form, we are done.
	
	Write $\map{\pi_{\Fr{h}_{i+1}}}{\Fr{h}_i}{\Fr{h}_i/\Fr{h}_{i+1}}$ for the natural projection. Since $dF_1$ is surjective, the map $\pi_{\Fr{h}_{i+1}} \circ dF_1|_{\Fr{g}_i}$ is surjective. Given that $\Fr{g}_{i+1} \leq \ker (\pi_{\Fr{h}_{i+1}} \circ dF_1|_{\Fr{g}_i})$, we have an induced surjective map
	$$
	\map{\widetilde{dF}_1}{\Fr{g}_i / \Fr{g}_{i+1}}{\Fr{h}_i / \Fr{h}_{i+1}}.
	$$
	Therefore, $\text{dim}_\R(\Fr{g}_i / \Fr{g}_{i+1}) \geq \text{dim}_\R(\Fr{h}_i / \Fr{h}_{i+1})$. We may write
	$$
	0 = d(\Ga) - d(\De) = \sum_k k \cdot \pr{\text{dim}_\R(\Fr{g}_i / \Fr{g}_{i+1}) - \text{dim}_\R(\Fr{h}_i / \Fr{h}_{i+1})} \geq 0.$$
	That implies $\text{dim}_\R(\Fr{g}_i / \Fr{g}_{i+1}) = \text{dim}_\R(\Fr{h}_i / \Fr{h}_{i+1})$ for all $i$. In particular, $\text{dim}(\Fr{g}) = \text{dim}(\Fr{h})$. Since $dF_1$ is a surjective map between Lie algebras of the same dimension, it is an isomorphism as desired.
\end{proof}

\begin{prop}\label{abelian_map_equiv}
	Let $\Ga$ and $\De$ be two torsion free, finitely generated  nilpotent groups, and let $F:\Ga_\infty\To\De_\infty$ be a Lie morphism. Then $F$ is surjective if and only if the induced map of abelianizations $F_\text{ab}:(\Ga_\infty)_\text{Ab}\To(\De_\infty)_{\text{Ab}}$ is surjective.
\end{prop}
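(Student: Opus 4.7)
The forward implication is immediate from functoriality of abelianization, so the plan concentrates on the converse: assume $F_\text{ab}$ is surjective and conclude $F$ is surjective. My strategy is to pass to Lie algebras. Writing $\Fr{g}$ and $\Fr{h}$ for the Lie algebras of $\Ga_\infty$ and $\De_\infty$ and $dF:\Fr{g}\To\Fr{h}$ for the differential at the identity, the fact that $\Ga_\infty$ and $\De_\infty$ are connected and simply connected nilpotent Lie groups means that $F$ is surjective iff $dF$ is surjective, and the hypothesis translates to: the image $W\bdef\image(dF)$ satisfies $W+\Fr{h}_2=\Fr{h}$. The goal becomes showing $W=\Fr{h}$.

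The key ingredient is the Carnot structure of $\Fr{h}$, which gives a grading $\Fr{h}=\bigoplus_{i=1}^c\Fr{v}_i$ with $\Fr{h}_k=\bigoplus_{i\geq k}\Fr{v}_i$ and with $\Fr{v}_1$ generating $\Fr{h}$ as a Lie algebra. I will prove by induction on $k\geq 2$ that $W+\Fr{h}_k=\Fr{h}$; since $\Fr{h}_{c+1}=0$, taking $k=c+1$ then yields $W=\Fr{h}$. The base case $k=2$ is the hypothesis. For the inductive step, given $W+\Fr{h}_k=\Fr{h}$, it suffices to show that $W+\Fr{h}_{k+1}$ contains $\Fr{v}_k$, because $\Fr{h}_k=\Fr{v}_k\oplus\Fr{h}_{k+1}$. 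Since $\Fr{h}$ is Carnot, $\Fr{v}_k$ is spanned by iterated brackets $[e_{i_1},[e_{i_2},\ldots,[e_{i_{k-1}},e_{i_k}]\ldots]]$ with each $e_{i_j}\in\Fr{v}_1$, so it is enough to show each such bracket lies in $W+\Fr{h}_{k+1}$.

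By the base case, each $e_{i_j}$ decomposes as $e_{i_j}=w_{i_j}+z_{i_j}$ with $w_{i_j}\in W$ and $z_{i_j}\in\Fr{h}_2$. Expanding the iterated bracket multilinearly, the single term in which every factor is a $w_{i_j}$ lies in $W$ because $W$ is a Lie subalgebra. Every other term in the expansion contains at least one $z_{i_j}\in\Fr{h}_2$, and a standard depth count using $[\Fr{h}_p,\Fr{h}_q]\subseteq\Fr{h}_{p+q}$ shows that any bracket monomial of total length $k$ in which at least one factor lies in $\Fr{h}_2$ (and the remaining factors lie in $\Fr{h}_1=\Fr{h}$) sits in $\Fr{h}_{k+1}$. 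Therefore
$$[e_{i_1},[e_{i_2},\ldots,[e_{i_{k-1}},e_{i_k}]\ldots]]=[w_{i_1},[w_{i_2},\ldots,[w_{i_{k-1}},w_{i_k}]\ldots]]+(\text{error in }\Fr{h}_{k+1}),$$
completing the induction.

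The main obstacle is the bookkeeping in the bracket expansion in the inductive step: one has to be confident that every monomial other than the all-$w$ monomial picks up enough depth from its $z$-factors to fall into $\Fr{h}_{k+1}$. This is where the Carnot hypothesis does real work, since in a general nilpotent Lie algebra the abelianization need not be generated by elements of step $1$, and the iterated-bracket decomposition of $\Fr{v}_k$ would be unavailable.
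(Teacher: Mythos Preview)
Your argument is correct. Both you and the paper pass to Lie algebras and exploit the fact that error terms coming from $\Fr{h}_2$ drop deeper in the lower central filtration under bracketing, but the inductions are organized differently. The paper inducts on the step length of $\Fr{h}$: it quotients by the top graded piece $\Fr{h}_{c_2}=Z(\Fr{h})$, applies the inductive hypothesis to $\Fr{h}/\Fr{h}_{c_2}$, and then checks that a single bracket $[Y_1+Z_1,Y_2+Z_2]$ with $Z_i$ central equals $[Y_1,Y_2]$, hitting the center. You instead fix $\Fr{h}$ and induct on $k$ in the statement $W+\Fr{h}_k=\Fr{h}$, expanding a length-$k$ bracket of elements of $\Fr{v}_1$ multilinearly. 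Your version is a bit more self-contained (no passage to a quotient algebra), while the paper's version keeps the bracket manipulation to a single two-fold commutator at each step.

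One comment on your final paragraph: the Carnot hypothesis is not actually where the content lies here. The statement ``a Lie subalgebra $W\leq\Fr{h}$ with $W+[\Fr{h},\Fr{h}]=\Fr{h}$ equals $\Fr{h}$'' holds for \emph{any} nilpotent Lie algebra. Your own argument shows this if you simply replace $\Fr{v}_1$ by $\Fr{h}$ throughout and use that $\Fr{h}_k$ is spanned by $k$-fold brackets of elements of $\Fr{h}$; the depth count is unchanged. So the Carnot grading is a convenience in both proofs, not a necessity.
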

\begin{proof}
	Let $\Fr{g}$ and $\Fr{h}$ be the Lie algebras of $\Ga_\infty$ and $\Delta_\infty$, respectively. Additionally, let $c_1$ and $c_2$ be the nilpotent step length of $\Ga_\infty$ and $\Delta_\infty$, respectively. We may write the graded decompositions of $\Fr{g}$ and $\Fr{h}$ as
	$$
	\Fr{g} = \bigoplus_{i=1}^{c_1}\Fr{g}_i \quad \text{ and } \quad \Fr{h} = \bigoplus_{i=1}^{c_2} \Fr{h}_i
	$$
	where $[\Fr{g}_1,\Fr{g}_{i}] = \Fr{g}_{i+1}$ and $[\Fr{h}_{1},\Fr{h}_i] = \Fr{h}_{i+1}$ for all $i$.
	Finally, let $dF_\text{Ab}:\Fr{g}_\text{Ab}\To\Fr{h}_\text{Ab}$ be the induced map of abelianizations.
	
	Since $dF_\text{Ab}$ is surjective when $dF$ is surjective, we consider the case of when $dF_{\text{Ab}}:\Fr{g}_\text{Ab}\To\Fr{h}_{\text{Ab}}$ is surjective. We proceed by induction on step length of $\Fr{h}$, and note that the base case follows from assumption. Thus, we may assume that $c_2 > 1$. Observe that $\Fr{h}_\text{ab} \cong (\Fr{h} / \Fr{h}_{c_2})_{\text{ab}}$. That implies the abelianization of the induced map $\widetilde{dF}:\Fr{g}\To\Fr{h} /  \Fr{h}_{c_2}$ is equivalent to the map $dF_\text{Ab}:\Fr{g}_{\text{ab}}\To\Fr{h}_{\text{Ab}}$. Therefore, the inductive hypothesis implies the map $\widetilde{dF}:\Fr{g}\To\Fr{h} / \Fr{h}_{c_2}$ is surjective. For $Y \in \Fr{h}_{c_2}$, there exists $Y_1 \in \Fr{h}_{1}$ and $Y_2 \in \Fr{h}_{c_2 - 1}$ such that $Y = [Y_1,Y_2]$. There exists $X_1, X_2 \in \Fr{g}$ such that $\widetilde{F}(X_1) = Y_1 \: \text{ mod } \: \Fr{h}_{c_2}$ and $\widetilde{F}(X_2) = Y_2 \: \text{ mod } \: \Fr{h}_{c_2}$. Thus, there exists $Z_1,Z_2 \in \Fr{h}_{c_2}$ such that $F(X_1) = Y_1 + Z_1$ and $F(X_2) = Y_2 + Z_2$. We may write
	$$
	dF([X_1,X_2]) = [dF(X_1),dF(X_2)] = [Y_1 + Z_1, Y_2 + Z_2] = [Y_1,Y_2] + [Y_1,Z_2] + [Z_1,Y_2] + [Z_1,Z_2].
	$$
	Since $\Fr{h}$ is Carnot, $\Fr{h}_{c_2} = Z(\Fr{h})$. Thus, $dF([X_1,X_2]) = [Y_1,Y_2] = Y$. In particular, $dF$ is surjective. \end{proof}

Combining the previous propositions, we have the following alternative.

\begin{corollary}\label{annihilating_functional}
Suppose $d(\Gamma)=d(\Delta)$, and $F:\Gamma_\infty\To\Delta_\infty$ is a homomorphism of Carnot groups. Either $F$ is an isomorphism, or there exists a homomorphism $\ell:\Delta_\infty\To\RR$ such that $\ell\circ F=0$ and $\sup\{\ell(h):h\in B(1,1_{\Delta_\infty})\}=1$.
\end{corollary}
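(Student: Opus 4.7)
The plan is to split on whether $F$ is surjective and then invoke the two preceding propositions. If $F$ is surjective, Proposition \ref{carnot_no_surjective_maps} applies; in fact, its proof establishes not merely the abstract isomorphism $\Ga_\infty\cong\De_\infty$ but the stronger statement that the induced Lie algebra map $dF$ is a surjection between spaces of equal dimension and hence an isomorphism, so $F$ itself is an isomorphism and the first alternative in the corollary holds.

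Next I would address the non-surjective case. By the contrapositive of Proposition \ref{abelian_map_equiv}, the induced map $F_{\text{Ab}}\co (\Ga_\infty)_{\text{Ab}}\To (\De_\infty)_{\text{Ab}}$ fails to be surjective. Since $(\De_\infty)_{\text{Ab}}$ is (via the exponential) isomorphic to the finite-dimensional real vector space $\Fr{h}_1$, the image of $F_{\text{Ab}}$ is a proper linear subspace and therefore lies in the kernel of some nonzero linear functional $\tilde{\ell}\co (\De_\infty)_{\text{Ab}}\To\RR$. Composing $\tilde\ell$ with the abelianization quotient $\De_\infty\twoheadrightarrow (\De_\infty)_{\text{Ab}}$ yields a continuous group homomorphism $\ell_0\co \De_\infty\To\RR$ with $\ell_0\circ F=0$.

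It remains to normalize. Using the identity $\ell_0(\delta_t(g))=t\,\ell_0(g)$ recorded in \S\ref{subsection:carnotgroups}, and the fact that the Carnot--Carath\'{e}odory metric is proper (so that $B(1,1_{\De_\infty})$ is compact), the supremum $M:=\sup\{\ell_0(h):h\in B(1,1_{\De_\infty})\}$ is finite. Because $\ell_0$ is continuous and nonzero and the unit ball has nonempty interior, $M>0$, so setting $\ell:=\ell_0/M$ produces a homomorphism with the required normalization and with $\ell\circ F=0$.

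The step I expect to require the most care is the normalization: one needs both that $\ell_0$ is continuous (which follows from its factorization through the abelianization, where it is a linear map on a finite-dimensional vector space) and that the closed unit ball in $\De_\infty$ is compact. Both points are essentially standard given the background set up in \S\ref{subsection:carnotgroups}, so the corollary should follow without further technical difficulty once the two alternatives are separated.
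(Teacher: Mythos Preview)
Your proposal is correct and follows essentially the same route as the paper: reduce via Propositions~\ref{carnot_no_surjective_maps} and~\ref{abelian_map_equiv} to the non-surjectivity of $F_{\text{Ab}}$, pick a linear functional annihilating its image, pull it back through the abelianization, and normalize. If anything you are slightly more careful than the paper in two places---you explicitly note that the proof of Proposition~\ref{carnot_no_surjective_maps} actually yields that $F$ itself is an isomorphism (not merely $\Ga_\infty\cong\De_\infty$), and you justify finiteness and positivity of the normalizing constant $M$ rather than simply asserting it is nonzero.
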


\begin{proof}
Suppose $F$ is not an isomorphism. Proposition \ref{carnot_no_surjective_maps} implies that $F$ is not surjective, and Proposition \ref{abelian_map_equiv} implies that $F_{\text{Ab}}$ is not surjective. Thus, $\text{Im}(F_{\text{Ab}})$ is a proper subspace of $(\De_{\infty})_{\text{Ab}}$ which is a finite dimensional $\R$-vector space. Hence, we may choose a norm on $(\Delta_\infty)_\text{Ab}$ and a unit-norm vector $v$ orthogonal to $\text{Im}(F_{\text{Ab}})$.  Let $\map{\ell_v}{(\De_{\infty})_{\text{Ab}}}{\R}$ be given by taking dot product with $v$ so that $\ell_v(v) = 1$ and $\ell_v(\text{Im}(F_{\text{ab}})) = 0$. Let $\pi_{\text{Ab}}:\De_{\infty}\To(\De_{\infty})_{\text{Ab}}$ be the natural projection, and let $M = \text{sup}\set{\ell_v(\pi_{\text{Ab}}(h)) \: | \: h \in B(1,1_{\De_{\infty}})}$ which is non-zero. Thus, the map $\map{\ell}{\De_{\infty}}{\R}$ given by $\ell(h) = M^{-1}\ell_v(\pi_{\text{Ab}}(h))$ satisfies the conditions of the proposition.
\end{proof}

\subsection{Applying Pansu's theorem.}
\label{subsection:maintheorem}
We now prove Theorem \ref{introtheorem}. 

\begin{theorem}
\label{maintheorem}
If $f:\Gamma\To\Delta$ is a Lipschitz injection of torsion free, finitely generated nilpotent groups, and $d(\Gamma)=d(\Delta)$, then $\Gamma_\infty\cong\Delta_\infty$.
\end{theorem}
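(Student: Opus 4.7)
The plan follows the outline in the introduction: obtain a Pansu derivative $\phi$ of $f_\infty$ at a generic point, and if $\phi$ fails to be a Lie group isomorphism, derive a ``slab'' constraint that forces $f$ to squeeze a ball of $\Gamma$ into too small a subset of $\Delta$, contradicting injectivity. After enlarging the generating set of $\Delta$, assume $f$ is $1$-Lipschitz so that $f_\infty\colon\Gamma_\infty\to\Delta_\infty$ is $1$-Lipschitz. By Pansu's theorem pick a point $p\in\Gamma_\infty$ where $f_\infty$ is Pansu differentiable and let $\phi:=DF_\infty|_p\colon\Gamma_\infty\to\Delta_\infty$, a Carnot Lie group homomorphism. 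If $\phi$ is an isomorphism we are done. If not, Corollary~\ref{annihilating_functional} furnishes a $1$-Lipschitz continuous homomorphism $\ell\colon\Delta_\infty\to\RR$ with $\ell\circ\phi=0$. Since $\ell$ factors through the abelianization and $\Delta_{\mathrm{Ab}}$ embeds as a lattice in $(\Delta_\infty)_{\mathrm{Ab}}$, it pulls back to a nontrivial Lipschitz homomorphism $\ell_\Delta\colon\Delta\to\RR$ with $\ell([y_n])=\lim_\mfU \ell_\Delta(y_n)/n$ whenever $|y_n|=O(n)$.

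Next I would turn Pansu differentiability into a discrete slab estimate. Given $\eta>0$, the uniform convergence defining the Pansu derivative, combined with $\ell\circ\phi=0$, $\ell\circ\delta_s=s\ell$, and $\ell$ being $1$-Lipschitz, produces $s_0=s_0(\eta)>0$ such that $|\ell(f_\infty(p)^{-1}f_\infty(p\delta_s x))|<\eta s$ for all $s<s_0$ and $x\in\overline{B(1,1_{\Gamma_\infty})}$. Writing $p=[p_n]$, the fourth item of the ball lemma together with a routine ultrafilter argument then yields: for $\mfU$-almost every $n$,
\[
f(B(sn,p_n))\subset B(sn,f(p_n))\cap\bigl\{y\in\Delta:|\ell_\Delta(f(p_n)^{-1}y)|\leq 2\eta sn\bigr\}.
\]

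Finally I would count. By injectivity and polynomial growth, $|f(B(sn,p_n))|=|B(sn,p_n)|\geq c_\Gamma(sn)^d$ for large $n$. For the upper bound, note that $\ker\ell_\Delta\supseteq[\Delta,\Delta]$ corresponds (via the identifications of the background section) to a closed Carnot subgroup of $\Delta_\infty$ of homogeneous dimension $d-1$, so a standard nilpotent growth estimate yields $|B_\Delta(R)\cap\ell_\Delta^{-1}(I)|\leq C_\Delta|I|R^{d-1}$ for intervals $I$ of width at most $R$. Taking $R=sn$ and $|I|=4\eta sn$ bounds the right side of the displayed inclusion by $4C_\Delta\eta(sn)^d$, forcing $c_\Gamma\leq 4C_\Delta\eta$. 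Choosing $\eta<c_\Gamma/(4C_\Delta)$ from the outset produces the desired contradiction. The main technical point will be justifying the slab--in--ball estimate $|B_\Delta(R)\cap\ell_\Delta^{-1}(I)|\lesssim|I|R^{d-1}$ in the nonabelian case; this reduces, via the abelianization $\pi\colon\Delta\to\Delta_{\mathrm{Ab}}$, to a lattice-point count in $\Delta_{\mathrm{Ab}}$ combined with the uniform bound $|\pi^{-1}(v)\cap B_\Delta(R)|\lesssim R^{d-d_1}$ for $|v|\leq CR$, which follows from Pansu's precise asymptotics applied to the fibers over the abelianization.
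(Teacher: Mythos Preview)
Your proposal is correct and reaches the same contradiction, but the endgame is organized differently from the paper's proof. Both arguments start identically: pass to $f_\infty$, take a point of Pansu differentiability, and invoke Corollary~\ref{annihilating_functional} to get a homomorphism $\ell:\Delta_\infty\to\RR$ annihilating the derivative. From there the paper runs a \emph{packing argument entirely in the cone}: it chooses $h\in\Delta_\infty$ with $\ell(h)=1$, shows that $1+2\lfloor\epsilon/\mu(\epsilon)\rfloor$ translates $(h_\epsilon)^j f_\infty(\overline{B(\epsilon,x)})$ are pairwise disjoint (their $\ell$-images lie in disjoint intervals) and all sit inside $\overline{B(2\epsilon,f_\infty(x))}$, then transfers this finite combinatorial picture to $\Gamma$ and $\Delta$ along the ultrafilter and finishes with the crude comparison $N\cdot\#B_\Gamma(\epsilon n)>\#B_\Delta(2\epsilon n)$. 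You instead pull $\ell$ back to a homomorphism $\ell_\Delta:\Delta\to\RR$, prove that $f(B(sn,p_n))$ lands in a slab $\{|\ell_\Delta(f(p_n)^{-1}y)|\le 2\eta sn\}\cap B_\Delta(sn)$, and bound the slab directly by $\lesssim\eta(sn)^d$.

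What each approach buys: the paper's packing argument is softer---it never leaves the Carnot world, never needs the compatibility $\ell([y_n])=\lim_\mfU\ell_\Delta(y_n)/n$, and uses nothing about nilpotent growth beyond $\#B_\Gamma(R)\asymp R^d\asymp\#B_\Delta(R)$. Your route is more quantitative but requires two extra ingredients: the pullback compatibility (which does hold, via the identification of $(\Delta_\infty)_{\mathrm{Ab}}$ with $\Delta_{\mathrm{Ab}}\otimes\RR$) and the slab-in-ball estimate $\#(B_\Delta(R)\cap\ell_\Delta^{-1}(I))\lesssim|I|R^{d-1}$. Your reduction of the latter to the fiber bound $\#(\pi^{-1}(v)\cap B_\Delta(R))\lesssim R^{d-d_1}$ is correct---it follows from $\pi^{-1}(v)\cap B_\Delta(R)\subseteq g([\Delta,\Delta]\cap B_\Delta(2R))$ and the Bass--Guivarc'h box description of balls in Mal'cev coordinates---though the phrase ``Pansu's precise asymptotics applied to the fibers'' is not quite the right citation; the Mal'cev coordinate estimate is what you actually need. (The aside that $\ker\ell$ is a Carnot subgroup of homogeneous dimension $d-1$ is not quite right in general, since the induced grading on $\ker d\ell$ need not be generated in degree~$1$, but you do not use this and your fiber-count argument stands on its own.)
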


\begin{proof}
Let $f_\infty:\Gamma_\infty\To\Delta_\infty$ be the induced map on asymptotic cones. Towards a contradiction, assume that $\Gamma_\infty$ and $\Delta_\infty$ are not isomorphic. We further assume, without loss of generality, that $f$ is $1$-Lipschitz.

The idea of the proof is as follows.
\begin{itemize}
\item First, we use Pansu's theorem (\S\ref{subsection:pansuderivative}) to show that there exists $x\in\Gamma_\infty$ such that $f_\infty$ is differentiable at $x$. Thus, we may choose some nonzero homomorphism $\ell:\Delta_\infty\To\RR$ which annihilates this derivative.
\item Second, we show that for every $N$ there exists $\epsilon>0$ such that $\overline{B(2\epsilon,f_\infty(x))}$ contains at least $N$ disjoint translates of $f_\infty(\overline{B(\epsilon,x)})$. In particular, we find these translates by moving in a direction ``perpendicular" to the kernel of $\ell$, so that they have disjoint $\ell$-images.
\item Finally, we take a sequence $b_n$ in $\Gamma$ such that $[b_n]=x$ and conclude that for every $N$ there is some $\epsilon>0$ such that along $\mfU$ the set $B(2\epsilon n,f(b_n))$ contains at least $N$ disjoint translates of $f(B(\epsilon n,b_n))$. Since $f$ is injective, this contradicts our assumption that $\Gamma$ and $\Delta$ have the same growth.
\end{itemize}

\paragraph{Constructing $x$ and $\ell$.} By Pansu's theorem (\S\ref{subsection:pansuderivative}), the $1$-Lipschitz function $f_\infty:\Gamma_\infty\To\Delta_\infty$ is differentiable almost everywhere, so there exists some $x\in\Gamma_\infty$ where it is differentiable. Write $Df_\infty$ for $Df_\infty|_{x}$. By Corollary \ref{annihilating_functional}, we may fix a homomorphism $\ell:\Delta_\infty\To\RR$ such that $\ell\circ Df_\infty=0$ and $\sup\{\ell(h):h\in \overline{B(1,1_{\Delta_\infty})}\}=1$. In what follows, the reader may wish to make the simplifying assumption that $x=1_{\Gamma_\infty}$ and $f_\infty(x)=1_{\Delta_\infty}$. (It is an exercise that this loses no generality).

\paragraph{Finding disjoint translates of $f_\infty(\overline{B(\epsilon,x)})$.} For $\epsilon>0$, let
$$\mu(\epsilon)=3\sup\{|\ell(\finf(g))-\ell(\finf(x))|:g\in \overline{B(\epsilon,x)}\}.$$
Because the defining limit for $Df_\infty$ converges uniformly on compact subsets and $\ell$ is Lipschitz, we have $\meps/\epsilon\To 0$ as $\epsilon\To 0$. Choose $h\in \overline{B(1,1_{\Delta_\infty})}$ such that $\ell(h)=1$, and let $\heps=\finf(x)\:\delta_{\meps}(h)\:\finf(x)^{-1}$, so that $\ell(\heps)=\meps$.

Consider the set $\Aeps$ of translates of $\finf(\overline{B(\epsilon,x)})$ given by
$$\Aeps=\left\{
(\heps)^j \finf(\overline{B(\epsilon,x)}):j\in\ZZ;|j|<\frac{\epsilon}{\meps}
\right\}.$$
We will simply refer to elements of $\Aeps$ as $\epsilon$-\textit{good translates} and shall establish the following.

\begin{itemize}
\item $\epsilon$-good translates are subsets of $\overline{B(2\epsilon,\finf(x))}$.
\item $\epsilon$-good translates are disjoint, i.e., if $B_1,B_2\in \Aeps$ with $B_1\neq B_2$, then $B_1\cap B_2=\emptyset$.
\item The number of $\epsilon$-good translates is $1+2\lfloor\frac{\epsilon}{\meps}\rfloor$ (i.e., $\#\Aeps=\#\{j\in \ZZ:|j|<\epsilon/\meps\}$).
\end{itemize}

\paragraph{$\epsilon$-good translates are subsets of the $2\epsilon$ ball.} Observe that for any integer $j$,
$$(\heps)^j\:\finf(\overline{B(\epsilon,x)})\subseteq (\heps)^j\: B(\epsilon,\finf(x))
=\left(\finf(x)\:\delta_\meps(h)^j\:\finf(x)^{-1}\right)\:
\finf(x)\:B(\epsilon,1_{\Delta_\infty})$$
$$=\finf(x)\:\delta_\meps(h)^j\:B(\epsilon,1_{\Delta_\infty}) \subseteq \finf(x)\:B(|j|\mu(\epsilon),1_{\Delta_\infty})\:
B(\epsilon,1_{\Delta_\infty})=B(\epsilon+|j|\meps,\finf(x)).$$
It follows that $\epsilon$-good translates are subsets of $\overline{B(2\epsilon,\finf(x))}$.

\paragraph{$\epsilon$-good translates are disjoint.} Since $\mu(\epsilon)=3\sup\{|\ell(\finf(g))-\ell(\finf(x))|:g\in \overline{B(\epsilon,x)}\},$ we have for $j \in \ZZ$ that
$$\ell((\heps)^j \finf(\overline{B(\epsilon,x)}))  
=j\meps + \ell(\finf(\overline{B(\epsilon,x)}))$$
$$\subseteq
\left(\left(j-\frac{1}{3}\right)\meps+\ell(\finf(x)), \left(j+\frac{1}{3}\right)\meps+\ell(\finf(x))\right).$$
Hence, for $j,k\in\ZZ$ with $j\neq k$ we have that
$$(\heps)^j \finf(\overline{B(\epsilon,x)})
\cap (\heps)^k \finf(\overline{B(\epsilon,x)})
=\emptyset$$
because the images under $\ell$ of $(\heps)^j \finf(\overline{B(\epsilon,x)})$ and $(\heps)^k \finf(\overline{B(\epsilon,x)})$ are contained in disjoint intervals.

\paragraph{Counting $\epsilon$-good translates.} Observe that there are exactly $1+2\lfloor\frac{\epsilon}{\meps}\rfloor$ integers $j$ such that $j\meps\leq 2\eps$. Hence, $\#\Aeps=1+2\lfloor\frac{\epsilon}{\meps}\rfloor$, and thus for any $N>0$ we have that $\# A_\epsilon>N$ for sufficiently small $\epsilon>0$, since $\epsilon/\meps\To\infty$ as $\epsilon\To 0$.

\paragraph{Finding disjoint translates of $f$-images of balls in $\Gamma$.} Take a sequence $(b_n)_{n\in\NN}$ of elements of $\Gamma$ with $[b_n]=x$, and, for each $\epsilon>0$, a sequence $(\hneps)_{n\in\NN}$ in $\Delta$ such that $[\hneps]=\heps$. For an integer $j$, we have that $[(\hneps)^j f(B(\epsilon n,b_n))]=(\heps)^j \finf(\overline{B(\epsilon,x)})$ and
$[B(2\epsilon n,f(b_n))]=\overline{B(2\epsilon,\finf(x))}$.

For any $\epsilon>0$ and $n\in\NN$, consider the set of translates of $f(B(\epsilon n,b_n))$ given by
$$\Aneps=
\left\{
(\hneps)^j f(B(\epsilon n,b_n)) : j\in\ZZ;|j|<\frac{\epsilon}{\meps}
\right\}.$$ 
We call these $(\epsilon,n)$-\textit{good translates}. We see for any $\epsilon$ that the following properties of $\Aeps$ pass to $\Aneps$ along $\mfU$, i.e., there exists $U\in\mfU$ such that the following statements all hold for $n\in U$.
\begin{itemize}
\item Each $(\epsilon,n)$-good translate is a subset of $B(2\epsilon n,f(b_n))$. 
\item $(\epsilon,n)$-good translates are disjoint, i.e., if $B_1,B_2\in \Aneps$ with $B_1\neq B_2$, then $B_1\cap B_2=\emptyset$.
\item The number of $(\epsilon,n)$-good translates is $1+2\lfloor\frac{\epsilon}{\meps}\rfloor$.
\end{itemize}

\paragraph{Conclusion.} Observe that, since $\Gamma$ and $\Delta$ both have growth of order $n^{d(\Gamma)}$, there exists $N>0$ such that
$$N\:\#B(n,1_\Gamma)>\# B(2n,1_\Delta)$$
for all $n>N$. Take $\epsilon > 0$ such that $\#\Aneps>N$. Then along $\mfU$,
$$\bigsqcup_{B\in\Aneps}B\subseteq B(2\epsilon n, f(b_n)),$$
and, thus
$$\#\bigsqcup_{B\in\Aneps}B\leq\# B(2\epsilon n, f(b_n)).$$
On the other hand, by injectivity of $f$ we see that 
$$\# \bigsqcup_{B\in\Aneps}B = N(\# B(\epsilon n,1_\Gamma))>B(2\epsilon n,f(b_n)).$$
We have thus obtained a contradiction, as desired.
\end{proof}

\bibliographystyle{plain}
\bibliography{bibliography}
\end{document}